\newtheorem{thm}{Theorem}
\newtheorem{cor}[thm]{Corollary}
\newtheorem{lem}[thm]{Lemma}
\newtheorem{proof}[thm]{Proof}
\journal{https://arxiv.org}
\begin{document}

\begin{frontmatter}



\title{On the summablility of truncated double Fourier series}


\author{Ahmed A. Abdelhakim}
\begin{abstract}
We estimate the
truncated double trigonometric  series \\$\;
\sum_{n=0}^{N}\sum_{m=0}^{M}a_{mn}
{e}^{2\pi \imath \left(m x+n y\right)},\,$
$ a_{mn} \in\mathbb{C}, $ in Lebesgue spaces
with mixed norms in terms of
the $p^{th}-q^{th}$ power finite double
sums of its coefficients. We obtain these estimates for
all possible values of the exponents
involved then we provide examples of
matrices in ${\mathbb{C}}^{M\times N}$
that maximize some of them up to a constant independent of
$M$ and $N$.
\end{abstract}
\begin{keyword}
double trigonometric sums \sep integrability \sep $L^{p}$ spaces with mixed norms
\MSC[2010] 26D15 \sep 42B99
\end{keyword}
\end{frontmatter}


\section{The problem}
Let $1\leq p,\,q,\,r,\,s \leq \infty. $
Consider the Banach space
$l^{p,q}(M,N)$ of all complex matrices
$A\in \mathbb{C}^{M\times N}$ with the norm
\begin{equation*}
\parallel A\parallel_{l^{p,q}(M,N)}\,=:
\,\left\{
\begin{array}{ll}
\left(\sum_{n=1}^{N}
\left(\sum_{m=1}^{M} |a_{mn}|^{p}\right)^{q/p}
 \right)^{1/q}, & \hbox{$1\leq p,\,q < \infty$;} \\
\left(\sum_{n=1}^{N}  \left(\max_{ 1\leq m\leq M}|a_{mn}|\right)^{q}\right)^{1/q}, & \hbox{$p=\infty, $ $ 1\leq q < \infty$;} \\
 \max_{ 1\le n\leq N}
 \left(\sum_{m=1}^{M}
|a_{mn}|^{p}\right)^{1/p}, & \hbox{$1\leq p< \infty, $
$ q= \infty;$} \\
 \max_{\substack{\; 1\le m\leq M,\\1\le n\leq N
}}\, |a_{mn}|, & \hbox{$ p = \infty, $
$ q = \infty.$}
 \end{array} \right.
\end{equation*}
Consider in addition the Banach space
$\,L^{r,s}(\,[a,b];[c,d]\,)=:
 L^{q}\left(\,[c,d];L^{p}\left([a,b]
\,\right)\,\right)\,$ of all functions
$\,f: [a,b]\times [c,d]\rightarrow
\mathbb{C}\,$ that are Lebesgue measurable
on $\,[a,b]\times[c,d]\,$
and satisfy that $\,\parallel f \parallel_{L^{r,s}([a,b];[c,d])}\,=\,
\parallel\, \parallel f (x,y)\parallel_{L_{x}^{r}([a,b])} \,\parallel_{L_{y}^{s}([c,d])}\,<\,\infty.$\\\\
Let $\,T_{M,N}:l^{p,q}(M,N) \rightarrow L^{r,s}([0,1];[0,1])\,$ be the linear operator
that assigns to each matrix
$A\in l^{p,q}(M,N)$
the double trigonometric sum
 $S_{M,N} = T_{M,N} A $ defined  by
\begin{align}\label{trigsum}
S_{M,N}(x,y)=\sum_{n=1}^{N}\sum_{m=1}^{M}\,a_{mn}
\,{e}^{2\pi \imath \left((m-1)\,x+(n-1)\, y\right)}.
\end{align}
The function
$S_{M,N}$ is smooth and 1-periodic in each variable. If the complex entries $a_{mn}$
in (\ref{trigsum}) are the Fourier coefficients
of some function in $L^{1}([0,1]\times [0,1])$
then $S_{M,N}$
is a rectangular partial sum of the
double Fourier series of that function.
This truncated sum proved useful in
many applications (cf. \cite{baxan,samuel})
 Let $Q$  be the hypercube $[0,1]^{4}$
and, for simplicity, let
$\,l^{p,q},\,$
$\,L^{r,s}\,$ and  $\,\mathcal{C}_{M,N}(p,q,r,s)\, $
denote the spaces
$l^{p,q}(M,N)\, $ and
$\,L^{r,s}([0,1];[0,1])\,$
and the operator norm
$\,\displaystyle \parallel T_{M,N}\parallel_{l^{p,q}\rightarrow L^{r,s}}=\sup_{
\substack{A\in l^{p,q},\,A\neq O}}
{\left({\parallel T_{M,N}A \parallel_{L^{r,s}}}/
{\parallel A\parallel_{l^{p,q}}}\right)}\;$
respectively. We are interested in estimating $S_{M,N}$
in the mixed $L^{r,s}$ norm
in terms of the $l^{p,q}$ norm of
its coefficients matrix $A$.
That is, we would like to prove estimates of the form
\begin{align}\label{e}
\parallel S_{M,N}\parallel_{L^{r,s}}\,\leq\,
c_{M,N}(p,q,r,s)\,
\parallel A \parallel_{l^{p,q}}
\quad\text{for all points }\quad \left(\frac{1}{p},\frac{1}{q},\frac{1}{r},\frac{1}{s}
\right)\in Q.
\end{align}
Since the linear space $l^{p,q}$ is finite dimensional
then we guarantee not only the boundedness
of $\,T_{M,N}\,$ but also the existence of a
maximizing matrix $A_{p,q,r,s}\in
\mathbb{C}^{M\times N}$ for which
\begin{align*}
\parallel T_{M,N} A_{p,q,r,s}\parallel_{L^{r,s}}\;=\;
\mathcal{C}_{M,N}(p,q,r,s)\,\parallel A_{p,q,r,s} \parallel_{l^{p,q}.}
\end{align*}
Foschi \cite{damianorem} studied this kind of boundedness
for the one dimensional trigonometric sum
$\,\sum_{n=0}^{N-1}a_{n} e^{\imath n x}.$
In \cite{lova1}, Vukolova and Dyachenko
considered  the sums of double
trigonometric series in sines and cosines
with multiply monotonous coefficients
(see \cite{lova2} by the same authors) and
proved some estimates of these sums
in $L^{p}$  spaces with a mixed norm.
\section{$l^{p,q}-L^{r,s}$ estimates}
If we take absolute values of both sides of
(\ref{trigsum}) then apply the triangle inequality we easily get the estimate
\begin{align}\label{e1}
\parallel S_{M,N}\parallel_{L^{\infty,\infty}}
\;\leq\;\parallel A \parallel_{l^{1,1}.}
\end{align}
The set $O=\left\{e^{2\pi \imath\,(m\,x+n\,y)},
\,(m,n)\in \mathbb{Z}\times\mathbb{Z}\right\}$ is
an orthonormal system in $L^{2,2}.$ Thus
\begin{align*}
\parallel S_{M,N}\parallel^{2}_{L^{2,2}}
\,=&\,
\int_{0}^{1}\int_{0}^{1}|S_{M,N}(x,y)|^2 dx dy
=\;
\int_{0}^{1}\int_{0}^{1}S_{M,N}(x,y)
\overline{S_{M,N}(x,y)} dx dy\,=
\\=&\,
\sum_{k=1}^{N}
\sum_{j=1}^{M}
\sum_{n=1}^{N}
\sum_{m=1}^{M}
a_{jk} \overline{a_{m n}}
\int_{0}^{1}
e^{2\pi \imath\,(j-m)\,x} dx
\int_{0}^{1}
e^{2\pi \imath (k-n) y} dy\\
=&\,
\sum_{n=1}^{N}
\sum_{k=1}^{N}
\sum_{m=1}^{M}
\sum_{j=1}^{M}
a_{jk} \overline{a_{m n}}
\delta_{jm}\delta_{kn}
\,=\,\sum_{k=1}^{N}\sum_{j=1}^{M}
\,|a_{jk}|^{2}
\;=\;\parallel A \parallel^{2}_{l^{2,2}.}
\end{align*}
Hence we have
\begin{align}\label{e2}
\parallel S_{M,N}\parallel_{L^{2,2}}
\;=\;\parallel A \parallel_{l^{2,2}.}
\end{align}
To this end we can
obtain the estimates (\ref{e})
on $Q$ without further looking
at the properties of the operator $T_{M,N}.\,$
First observe that, by H\"{o}lder's inequality, we have
\begin{align}\label{holderf}
&\left.
\begin{array}{ll}
\parallel f \parallel_{L^{\bar{r},s}}\,\leq\,
\parallel f \parallel_{L^{r,s}},
& \qquad 1\leq \bar{r}  \leq r \leq \infty,\\\\
\parallel f \parallel_{L^{r,\bar{s}}}\,\leq\,
\parallel f \parallel_{L^{r,s}},
&\qquad 1\leq \bar{s}  \leq s \leq \infty,
\end{array}\right\}
\end{align}
for any function $f \in L^{r,s}.\;$
It also follows from H\"{o}lder's inequality that
\begin{align}\label{holdera}
&\left.\begin{array}{ll}
\parallel A \parallel_{l^{p,q}}\,\leq\,
M^{\frac{1}{p}-\frac{1}{\bar{p}}}\,
\parallel A \parallel_{l^{\bar{p},q}},
&\qquad 1\leq p \leq \bar{p} \leq \infty,\\\\
\parallel A \parallel_{l^{p,q}}\,\leq\,
N^{\frac{1}{q}-\frac{1}{\bar{q}}}\,
\parallel A \parallel_{l^{p,\bar{q}}},
& \qquad 1\leq q \leq \bar{q} \leq \infty,
\end{array}\right\}
\end{align}
for any $\, A\in \mathbb{C}^{M\times N}.\;$
Using (\ref{holderf})
we deduce from the estimate (\ref{e1}) that
$\,\parallel S_{M,N}\parallel_{L^{r,s}}$
$\;\leq\;\parallel A \parallel_{l^{1,1}},\quad
1\leq r,s\leq\infty,\,$
which tells us that all $\,L^{r,s}\,$ norms
of $\,S_{M,N}\,$ are controlled by the sum
$\;\sum_{n=1}^{N}\sum_{m=1}^{M}\,|a_{mn}|.\,$
Of course we could furthermore apply (\ref{holdera})
to the latter estimate and get that
$\;\parallel S_{M,N}\parallel_{L^{r,s}}
\;\leq\;M^{1-\frac{1}{p}}\,N^{1-\frac{1}{q}}\,\parallel A \parallel_{l^{p,q}},\quad
1\leq p,q,r,s\leq\infty.\,$ But we will momentarily find
stronger estimates  everywhere in $Q-\{(1,1,0,0)\}.\,$
For instance, if we apply (\ref{holderf}) to
the equality (\ref{e2}) we obtain the estimate
\begin{align}\label{e21}
\parallel S_{M,N}\parallel_{L^{r,s}}
\,\leq\,\parallel A \parallel_{l^{2,2}},
\quad \frac{1}{2}\leq \frac{1}{r} \leq 1,
\;\;\frac{1}{2}\leq \frac{1}{s}\leq 1,
\end{align}
and since, by (\ref{holdera}), $\displaystyle\:\parallel A \parallel_{l^{2,2}}\,\leq\, M^{\frac{1}{2}-\frac{1}{p}}
\,N^{\frac{1}{2}-\frac{1}{q}}\,
\parallel A \parallel_{l^{p,q}}\:$
for all $\displaystyle\:2\leq p,\,q \leq \infty\:$ then it follows from (\ref{e21}) that
\begin{align}\label{upper3}
\parallel S_{M,N}\parallel_{L^{r,s}}
\,\leq\,M^{\frac{1}{2}-\frac{1}{p}}
\,N^{\frac{1}{2}-\frac{1}{q}}\,
\parallel A \parallel_{l^{p,q}},
\quad 0\leq \frac{1}{p},\,
\frac{1}{q} \leq \frac{1}{2},\;\;
\frac{1}{2}\leq \frac{1}{r},\,\frac{1}{s} \leq 1.
\end{align}
By standard $L^{p}$ interpolation  (cf. \cite{BerghLofstrom}) between the $l^{ 1,1}-L^{\infty,\infty}$ estimate
(\ref{e1}) and the $l^{ 2,2}-L^{2,2}$ estimate
(\ref{e2}) we obtain
\begin{align}\label{e12}
\parallel S_{M,N}\parallel_{L^{r,s}}
\;\leq\;\parallel A \parallel_{l^{p,q}},
\end{align}
for all points $\, (\frac{1}{p},\frac{1}{q},
\frac{1}{r}, \frac{1}{s})\, $
on the line segment joining the two points
$(\frac{1}{2},\frac{1}{2},\frac{1}{2},\frac{1}{2})$ and
$(1,1,0,0)$  in $Q$ given by
$\;\;\;\displaystyle
\frac{1}{2}\leq \frac{1}{p}=\frac{1}{q}\leq 1,\quad
0\leq \frac{1}{r}=\frac{1}{s}\leq \frac{1}{2},\quad
\frac{1}{p}+\frac{1}{q}+\frac{1}{r}+\frac{1}{s}=2.$\\\\
Applying (\ref{holderf})  to the estimate (\ref{e12})
we get
\begin{align}
\label{u2} \hspace{-0.5 cm}\parallel S_{M,N}
\parallel_{L^{r,s}}
\,\leq\, \parallel A \parallel_{l^{p,q}},
\qquad
1-\frac{1}{p}\leq\frac{1}{r}\leq 1, \,\;
1-\frac{1}{q}\leq\frac{1}{s}\leq 1, \,\;
\frac{1}{2}\leq \frac{1}{p}=\frac{1}{q}\leq 1.
\end{align}
Moreover, using the first part of
(\ref{holdera}), it follows from (\ref{u2})
that \begin{align}
& \nonumber \hspace{-0.5 cm}\parallel S_{M,N}
\parallel_{L^{r,s}}
\,\leq\, M^{\frac{1}{q}-\frac{1}{p}}\,
\parallel A \parallel_{l^{p,q}},
\\ &\label{upper4} \qquad\qquad\qquad
1-\frac{1}{q}\leq\frac{1}{r}\leq 1, \,\;
1-\frac{1}{q}\leq\frac{1}{s}\leq 1, \,\;
0\leq \frac{1}{p}\leq \frac{1}{q}, \,\;
\frac{1}{2}\leq \frac{1}{q}\leq 1.
\end{align}
While applying the second part of
(\ref{holdera}) to (\ref{u2}) we obtain
\begin{align}
& \nonumber \hspace{-0.5 cm}\parallel S_{M,N}
\parallel_{L^{r,s}}
\,\leq\, N^{\frac{1}{p}-\frac{1}{q}}\,
\parallel A \parallel_{l^{p,q}},
\\ &\label{upper5} \qquad\qquad\qquad
1-\frac{1}{p}\leq\frac{1}{r}\leq 1, \,\;
1-\frac{1}{p}\leq\frac{1}{s}\leq 1, \,\;
0\leq \frac{1}{q}\leq \frac{1}{p}, \,\;
\frac{1}{2}\leq \frac{1}{p}\leq 1.
\end{align}
Observe here that (\ref{upper3}) follows
either from (\ref{upper4}) with $q=2$ after reapplying
(\ref{holdera}) to the norm $
\,\parallel A \parallel_{l^{p,2}}\,$ or
from (\ref{upper5}) with $p=2$ after reapplying
(\ref{holdera}) to the norm $
\,\parallel A \parallel_{l^{2,q}.}\;$\\
Interestingly, if we reverse the order in which we apply
the consequences of H\"{o}lder's inequality, (\ref{holderf})
and (\ref{holdera}), to the estimate (\ref{e12})
we recover the estimate
(\ref{e}) for another range of the exponents $\,p,q,r,s.\,$
Indeed, applying (\ref{holdera}) first to
the estimate (\ref{e12}) yields
\begin{align}
&\nonumber \hspace{-0.5 cm}\parallel S_{M,N}
\parallel_{L^{r,s}}
\,\leq\, M^{1-\frac{1}{r}-\frac{1}{p}}\,
N^{1-\frac{1}{s}-\frac{1}{q}}\,
\parallel A \parallel_{l^{p,q}},
\\ \label{u1}&\qquad\qquad\qquad\qquad \qquad
0\leq\frac{1}{p}\leq 1-\frac{1}{r}, \,\;
0\leq\frac{1}{q}\leq 1-\frac{1}{s}, \,\;
0\leq \frac{1}{r}=\frac{1}{s}\leq \frac{1}{2}.
\end{align}
Moreover, if we carefully use the inequalities
(\ref{holderf}) in (\ref{u1}) we are led to the estimates
\begin{align}
&\nonumber \parallel S_{M,N}
\parallel_{L^{r,s}}
\,\leq\, M^{1-\frac{1}{r}-\frac{1}{p}}\,
N^{1-\frac{1}{r}-\frac{1}{q}}\,
\parallel A \parallel_{l^{p,q}},
\\ &\label{upper21}\hspace{2 cm}
0\leq \frac{1}{p}\leq 1-\frac{1}{r}, \;\;
0\leq \frac{1}{q}\leq 1-\frac{1}{r}, \;\;
\frac{1}{r}\leq \frac{1}{s}\leq 1, \;\;
0\leq \frac{1}{r}\leq \frac{1}{2},\\
&\nonumber \parallel S_{M,N}
\parallel_{L^{r,s}}
\,\leq\, M^{1-\frac{1}{s}-\frac{1}{p}}\,
N^{1-\frac{1}{s}-\frac{1}{q}}\,
\parallel A \parallel_{l^{p,q}},
\\ &\label{upper22}\hspace{2 cm}
0\leq \frac{1}{p}\leq 1-\frac{1}{s}, \;\;
0\leq \frac{1}{q}\leq 1-\frac{1}{s}, \;\;
\frac{1}{s}\leq \frac{1}{r}\leq 1, \;\;
0\leq \frac{1}{s}\leq \frac{1}{2}.
\end{align}
Again, using  (\ref{holderf}),
the estimate (\ref{upper3})
results from (\ref{upper21})  with $\,r=2\,$
and from  (\ref{upper22}) with $\,s=2.\,$
The estimate (\ref{upper21}) coincides with
(\ref{upper4}) only in the region  $\displaystyle\,
{1}/{q}+{1}/{r}=1\,$  and
coincides with (\ref{upper5})
only in the region $\displaystyle\,
{1}/{p}+{1}/{r}=1.\,$ Similarly,
the estimate (\ref{upper22}) coincides with
(\ref{upper4}) and (\ref{upper5})
exclusively in the regions
$\displaystyle\,
{1}/{q}+{1}/{s}=1\,$  and
 $\displaystyle\,
{1}/{p}+{1}/{s}=1,\,$
respectively.
\\\\ The relation between the exponents $p,q,r,s$
for which the estimates
(\ref{upper4}), (\ref{upper5}),
(\ref{upper21}) and (\ref{upper22}) hold
can be demonstrated by the following respective
four sets of figures.
\begin{align*}
\hspace{-0.5 cm}
&\begin{tikzpicture} [scale=6]
\draw[fill=gray!8] (0.2,0.2)--(0.4, 0.4)--
(0.4,0.0)--(0.2,0.0)--cycle;
\draw (0.2,0.2)--(0.4, 0.4)--
(0.4,0.0)--(0.2,0.0)--cycle;
\draw[->] (0.0, 0.0) -- (0.46, 0);
\draw (0.48, 0) node[below] {\large{$\frac{1}{p}$}};
   \draw[->] (0.0,0.0) -- (0.0,0.47);
\draw (0.0,0.49) node[left] {\large{$\frac{1}{q}$}};
\draw (0.0,0.4) node[left] {\small{1}};
\draw (0.4,0.0) node[below] {\small{1}};
\draw (0.2, 0.0) node[below] {\small{$\frac{1}{2}$}};
\draw[dashed](0.0,0.4)--(0.4,0.4);
\draw[dashed](0.0,0.0)-- (0.2,0.2);
\end{tikzpicture}\qquad
\begin{tikzpicture} [scale=6]
\draw[fill=gray!8] (0.19,0.4)--(0.4,0.4)--
(0.4,0.0)--(0.19, 0.21)--cycle;
\draw[->] (0.0, 0.0) -- (0.46, 0);
\draw (0.48, 0) node[below] {\large{$\frac{1}{p}$}};
\draw[->] (0.0,0.0) -- (0.0,0.47);
\draw (0.0,0.49) node[left] {\large{$\frac{1}{r}$}};
\draw (0.0,0.4) node[left] {\small{1}};
\draw (0.4,0.0) node[below] {\small{1}};
\draw (0.19,0.21)--(0.19, 0.4);
\draw (0.19,0.0) node[below] {\small{$\frac{1}{2}$}};
\draw (0.4,0.0)--(0.4,0.4)--(0.19,0.4);
\draw [dashed](0.0,0.4)--(0.19,0.21);
\draw [dashed](0.0,0.4)--(0.19,0.4);
\draw [dashed] (0.19,0.0)--(0.19,0.21);
\end{tikzpicture}\qquad
\begin{tikzpicture} [scale=6]
\draw[fill=gray!8] (0.19,0.4)--(0.4,0.4)--
(0.4,0.0)--(0.19, 0.21)--cycle;
\draw[->] (0.0, 0.0) -- (0.46, 0);
\draw (0.48, 0) node[below] {\large{$\frac{1}{p}$}};
\draw[->] (0.0,0.0) -- (0.0,0.47);
\draw (0.0,0.49) node[left] {\large{$\frac{1}{s}$}};
\draw (0.0,0.4) node[left] {\small{1}};
\draw (0.4,0.0) node[below] {\small{1}};
\draw (0.19,0.21)--(0.19, 0.4);
\draw (0.19,0.0) node[below] {\small{$\frac{1}{2}$}};
\draw (0.4,0.0)--(0.4,0.4)--(0.19,0.4);
\draw [dashed](0.0,0.4)--(0.19,0.21);
\draw [dashed](0.0,0.4)--(0.19,0.4);
\draw [dashed] (0.19,0.0)--(0.19,0.21);
\end{tikzpicture}
\end{align*}
\begin{align*}
\begin{tikzpicture} [scale=6]
\draw[fill=gray!8] (0.2,0.2)--(0.4, 0.4)--
(0.0,0.4)--(0.0,0.2)--cycle;
\draw (0.2,0.2)--(0.4, 0.4)--
(0.0,0.4)--(0.0,0.2)--cycle;
\draw[->] (0.0, 0.0) -- (0.46, 0);
\draw (0.48, 0) node[below] {\large{$\frac{1}{p}$}};
   \draw[->] (0.0,0.0) -- (0.0,0.47);
\draw (0.0,0.49) node[left] {\large{$\frac{1}{q}$}};
\draw (0.0,0.4) node[left] {\small{1}};
\draw (0.4,0.0) node[below] {\small{1}};
\draw (0.2, 0.0) node[below] {\small{$\frac{1}{2}$}};
\draw[dashed](0.4,0.0)--(0.4,0.4);
\draw[dashed](0.0,0.0)-- (0.2,0.2);
\draw[dashed](0.2,0.0)-- (0.2,0.2);
\end{tikzpicture}\qquad
\begin{tikzpicture} [scale=6]
\draw[fill=gray!8] (0.19,0.4)--(0.4,0.4)--
(0.4,0.0)--(0.19, 0.21)--cycle;
\draw[->] (0.0, 0.0) -- (0.46, 0);
\draw (0.48, 0) node[below] {\large{$\frac{1}{q}$}};
\draw[->] (0.0,0.0) -- (0.0,0.47);
\draw (0.0,0.49) node[left] {\large{$\frac{1}{r}$}};
\draw (0.0,0.4) node[left] {\small{1}};
\draw (0.4,0.0) node[below] {\small{1}};
\draw (0.19,0.21)--(0.19, 0.4);
\draw (0.19,0.0) node[below] {\small{$\frac{1}{2}$}};
\draw (0.4,0.0)--(0.4,0.4)--(0.19,0.4);
\draw [dashed](0.0,0.4)--(0.19,0.21);
\draw [dashed](0.0,0.4)--(0.19,0.4);
\draw [dashed] (0.19,0.0)--(0.19,0.21);
\end{tikzpicture}\qquad
\begin{tikzpicture} [scale=6]
\draw[fill=gray!8] (0.19,0.4)--(0.4,0.4)--
(0.4,0.0)--(0.19, 0.21)--cycle;
\draw[->] (0.0, 0.0) -- (0.46, 0);
\draw (0.48, 0) node[below] {\large{$\frac{1}{q}$}};
\draw[->] (0.0,0.0) -- (0.0,0.47);
\draw (0.0,0.49) node[left] {\large{$\frac{1}{s}$}};
\draw (0.0,0.4) node[left] {\small{1}};
\draw (0.4,0.0) node[below] {\small{1}};
\draw (0.19,0.21)--(0.19, 0.4);
\draw (0.19,0.0) node[below] {\small{$\frac{1}{2}$}};
\draw (0.4,0.0)--(0.4,0.4)--(0.19,0.4);
\draw [dashed](0.0,0.4)--(0.19,0.21);
\draw [dashed](0.0,0.4)--(0.19,0.4);
\draw [dashed] (0.19,0.0)--(0.19,0.21);
\end{tikzpicture}
\end{align*}
\begin{align*}
\begin{tikzpicture} [scale=6]
\draw[fill=gray!8] (0.0,0.4)--(0.2, 0.4)--
(0.2,0.2)--(0.0,0.0)--cycle;
\draw (0.0,0.4)--(0.2, 0.4)--
(0.2,0.2)--(0.0,0.0)--cycle;
\draw[->] (0.0, 0.0) -- (0.46, 0);
\draw (0.48, 0) node[below] {\large{$\frac{1}{r}$}};
   \draw[->] (0.0,0.0) -- (0.0,0.47);
\draw (0.0,0.49) node[left] {\large{$\frac{1}{s}$}};
\draw (0.0,0.4) node[left] {\small{1}};
\draw (0.4,0.0) node[below] {\small{1}};
\draw (0.2, 0.0) node[below] {\small{$\frac{1}{2}$}};
\draw[dashed](0.2,0.4)--(0.4,0.4)--(0.4,0.0);
\draw[dashed] (0.2,0.2)--(0.4,0.4);
\draw[dashed] (0.2,0.2)--(0.2,0.4);
\draw[dashed] (0.2,0.0)--(0.2,0.2);
\end{tikzpicture}\qquad
\begin{tikzpicture} [scale=6]
\draw[fill=gray!8] (0.0,0.0)--(0.0,0.2)--
(0.2,0.2)--(0.4, 0.0)--cycle;
\draw[->] (0.0, 0.0) -- (0.46, 0);
\draw (0.48, 0) node[below] {\large{$\frac{1}{p}$}};
   \draw[->] (0.0,0.0) -- (0.0,0.47);
\draw (0.0,0.49) node[left] {\large{$\frac{1}{r}$}};
\draw (0.0,0.4) node[left] {\small{1}};
\draw (0.4,0.0) node[below] {\small{1}};
\draw (0.0,0.2) node[left] {\small{$\frac{1}{2}$}};
\draw [dashed] (0.0,0.4)--(0.2,0.2);
\end{tikzpicture}\qquad
\begin{tikzpicture} [scale=6]
\draw[fill=gray!8] (0.0,0.0)--(0.0,0.2)--
(0.2,0.2)--(0.4, 0.0)--cycle;
\draw[->] (0.0, 0.0) -- (0.46, 0);
\draw (0.48, 0) node[below] {\large{$\frac{1}{q}$}};
   \draw[->] (0.0,0.0) -- (0.0,0.47);
\draw (0.0,0.49) node[left] {\large{$\frac{1}{r}$}};
\draw (0.0,0.4) node[left] {\small{1}};
\draw (0.4,0.0) node[below] {\small{1}};
\draw (0.0,0.2) node[left] {\small{$\frac{1}{2}$}};
\draw [dashed] (0.0,0.4)--(0.2,0.2);
\end{tikzpicture}
\end{align*}
\begin{align*}
\begin{tikzpicture} [scale=6]
\draw[fill=gray!8] (0.4,0.0)--(0.4, 0.2)--
(0.2,0.2)--(0.0,0.0)--cycle;
\draw (0.4,0.0)--(0.4, 0.2)--
(0.2,0.2)--(0.0,0.0)--cycle;
\draw[->] (0.0, 0.0) -- (0.46, 0);
\draw[dashed](0.0,0.4)--(0.4,0.4)--(0.4,0.2);
\draw[dashed] (0.2,0.2)--(0.4,0.4);
\draw[dashed] (0.2,0.0)--(0.2,0.2);
\draw (0.48, 0) node[below] {\large{$\frac{1}{r}$}};
   \draw[->] (0.0,0.0) -- (0.0,0.47);
\draw (0.0,0.49) node[left] {\large{$\frac{1}{s}$}};
\draw (0.0,0.4) node[left] {\small{1}};
\draw (0.4,0.0) node[below] {\small{1}};
\draw (0.2, 0.0) node[below] {\small{$\frac{1}{2}$}};
\end{tikzpicture}\qquad
\begin{tikzpicture} [scale=6]
\draw[fill=gray!8] (0.0,0.0)--(0.0,0.2)--
(0.2,0.2)--(0.4, 0.0)--cycle;
\draw[->] (0.0, 0.0) -- (0.46, 0);
\draw (0.48, 0) node[below] {\large{$\frac{1}{p}$}};
   \draw[->] (0.0,0.0) -- (0.0,0.47);
\draw (0.0,0.49) node[left] {\large{$\frac{1}{s}$}};
\draw (0.0,0.4) node[left] {\small{1}};
\draw (0.4,0.0) node[below] {\small{1}};
\draw (0.0,0.2) node[left] {\small{$\frac{1}{2}$}};
\draw [dashed] (0.0,0.4)--(0.2,0.2);
\end{tikzpicture}\qquad
\begin{tikzpicture} [scale=6]
\draw[fill=gray!8] (0.0,0.0)--(0.0,0.2)--
(0.2,0.2)--(0.4, 0.0)--cycle;
\draw[->] (0.0, 0.0) -- (0.46, 0);
\draw (0.48, 0) node[below] {\large{$\frac{1}{q}$}};
   \draw[->] (0.0,0.0) -- (0.0,0.47);
\draw (0.0,0.49) node[left] {\large{$\frac{1}{s}$}};
\draw (0.0,0.4) node[left] {\small{1}};
\draw (0.4,0.0) node[below] {\small{1}};
\draw (0.0,0.2) node[left] {\small{$\frac{1}{2}$}};
\draw [dashed] (0.0,0.4)--(0.2,0.2);
\end{tikzpicture}
\end{align*}
One way to summarize the estimates obtained above
is the following theorem.
\begin{thm}\label{thmupper}
Let $\;\Theta: Q\rightarrow [\frac{1}{2},1]\;$
be the continuous surjection defined by\\
\begin{align*}
\Theta(\alpha,\beta,\gamma,\delta):=\,\left\{
  \begin{array}{ll}
\frac{1}{2}, & \hbox{$\;0\leq \alpha\leq \frac{1}{2},\;$
$\;0\leq \beta\leq \frac{1}{2},\;$ $\frac{1}{2}\leq
\gamma \geq 1,\;$
$\frac{1}{2}\leq
\delta \geq 1$;} \\\\
\alpha, & \hbox{$\;\frac{1}{2}\leq \alpha\leq 1,\;$
$\alpha\geq \beta,\;$ $\alpha+\gamma \geq 1,\;$
$\alpha+\delta \geq 1$;} \\\\
\beta, & \hbox{$\;\frac{1}{2}\leq \beta\leq 1,\;$
$\beta\geq \alpha,\;$ $\beta+\gamma \geq 1,\;$
$\beta+\delta \geq 1$;} \\\\
1-\gamma, & \hbox{$\;0\leq \gamma\leq \frac{1}{2},\;$
$\gamma\leq \delta,\;$ $\alpha+\gamma \leq 1,\;$
$\beta+\gamma \leq 1$;} \\\\
1-\delta, & \hbox{$\;0\leq \delta\leq \frac{1}{2},\;$
$\delta\leq \gamma,\;$ $\alpha+\delta \leq 1,\;$
$\beta+\delta\leq 1$.}
  \end{array}
\right.
\end{align*}
Then
\begin{align}\label{thsum}
\parallel S_{M,N}\parallel_{L^{r,s}}
\;\leq\;
\frac{\left(M N\right)^{\Theta\,
\left(\frac{1}{p},\frac{1}{q},\frac{1}{r},\frac{1}{s}\right
)}}{M^{\frac{1}{p}}
\,N^{\frac{1}{q}}}\,
\parallel A \parallel_{l^{p,q}.}
\end{align}
\end{thm}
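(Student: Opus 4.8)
The plan is to read Theorem~\ref{thmupper} as a bookkeeping of the five estimates (\ref{upper3}), (\ref{upper4}), (\ref{upper5}), (\ref{upper21}) and (\ref{upper22}) already established: the five branches in the definition of $\Theta$ are arranged so that, on each of them, the inequality (\ref{thsum}) \emph{is} one of those estimates rewritten. First I would fix the abbreviation $(\alpha,\beta,\gamma,\delta)=\left(\frac1p,\frac1q,\frac1r,\frac1s\right)$ and record the trivial identity $(MN)^{\Theta}/(M^{1/p}N^{1/q})=M^{\Theta-1/p}N^{\Theta-1/q}$, which turns the whole matter into comparing exponents of $M$ and $N$ branch by branch.

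Then I would match the branches. On the branch $\Theta\equiv\frac12$ the region is $\{0\le\frac1p,\frac1q\le\frac12,\ \frac12\le\frac1r,\frac1s\le1\}$ and the right-hand side of (\ref{thsum}) is $M^{1/2-1/p}N^{1/2-1/q}\parallel A\parallel_{l^{p,q}}$, i.e.\ (\ref{upper3}). On the branch $\Theta=\alpha=\frac1p$ the power of $M$ disappears, that of $N$ is $\frac1p-\frac1q$, and the constraints $\frac12\le\frac1p\le1$, $\frac1q\le\frac1p$, $1-\frac1p\le\frac1r$, $1-\frac1p\le\frac1s$ are exactly those of (\ref{upper5}); symmetrically the branch $\Theta=\beta$ reproduces (\ref{upper4}). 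On the branch $\Theta=1-\gamma=1-\frac1r$ the right-hand side becomes $M^{1-1/r-1/p}N^{1-1/r-1/q}\parallel A\parallel_{l^{p,q}}$ with the region of (\ref{upper21}), and the branch $\Theta=1-\delta$ reproduces (\ref{upper22}) in the same way. Thus (\ref{thsum}) holds at every point of each of the five regions, and on their overlaps the two applicable formulas coincide because the defining inequalities there degenerate to an equality (e.g.\ $\alpha+\gamma=1$ on the intersection of the second and fourth regions), so $\Theta$ is single-valued; continuity then follows from the pasting lemma and surjectivity onto $[\frac12,1]$ from the second branch with $\alpha\in[\frac12,1]$, $\beta=\alpha$, $\gamma=\delta=1$.

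The one step that needs a genuine argument is that the five regions cover all of $Q$, since only then does (\ref{thsum}) hold everywhere on the hypercube. I would prove this by cases: given $(\alpha,\beta,\gamma,\delta)\in Q$, use the $\gamma\leftrightarrow\delta$ symmetry (which swaps the fourth and fifth regions and fixes the others) to assume $\gamma=\min(\gamma,\delta)$. If $\gamma\le\frac12$ and $\alpha+\gamma\le1$ and $\beta+\gamma\le1$, the point lies in the fourth region. If $\gamma>\frac12$ and $\alpha,\beta\le\frac12$ (so $\delta\ge\gamma>\frac12$ as well), it lies in the first. In every other case $\max(\alpha,\beta)>\frac12$, and one checks that whichever of $\alpha,\beta$ realises this maximum, say $t$, satisfies $t\ge$ the other one, $t+\gamma\ge1$, and $t+\delta\ge t+\gamma\ge1$, placing the point in the second region if $t=\alpha$ and in the third if $t=\beta$. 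I expect this covering check, keeping the several inequalities of each region straight and invoking the symmetry correctly, to be the only mildly laborious part of the proof; apart from it, one only has to read past the evident misprint ``$\frac12\le\gamma\ge1$'' (and its $\delta$ analogue) in the first branch, which should read $\frac12\le\gamma\le1$.
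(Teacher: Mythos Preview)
Your proposal is correct and matches the paper's approach: the paper presents Theorem~\ref{thmupper} not with a separate proof but as a summary of the five estimates (\ref{upper3}), (\ref{upper4}), (\ref{upper5}), (\ref{upper21}) and (\ref{upper22}) derived just before it, and your branch-by-branch identification reproduces exactly that reading. Your verification that the five regions cover $Q$ and that $\Theta$ is well-defined, continuous, and surjective fills in bookkeeping the paper leaves implicit; the case analysis you sketch for coverage is sound.
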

Next, we try to find maximizers $ A_{p,q,r,s}\in
\mathbb{C}^{M\times N} $ for
the estimate (\ref{thsum}).
\section{Search for the maximizers}
We begin with discussing a potential maximizer
for the estimate (\ref{upper3}). We anticipate,
for this purpose, the estimate
(\ref{imm1}) in Lemma \ref{application} which is
an implication of Lemma \ref{van} below when $g$
is a constant function. Lemma \ref{van} is
due to Van der Corput. It provides an approximation
for exponential sums with certain phases by
oscillatory integrals.
\begin{lem}\label{van}
(\emph{\cite{marsh}, Lemma 4.10}). Let
$\,f\,$ be a smooth function such that $\,f^{\prime}(x)\,$ is decreasing with $\,f^{\prime}(a) = \beta,\,$ $\,f^{\prime}(b) = \alpha.\,$ Let $ g(x) $ be a real positive function with a continuous derivative
and $|g^{\prime}(x)|$ decreasing. Let $\,\theta\in(0,1)\,$ be a constant. Then
\begin{align*}
&\sum_{a<n< b}\,g(n)\,e^{2\pi \imath f(n)}
\;=\;\sum_{\alpha-\theta<\lambda< \beta+\theta}\,\int_{a}^{b}\,g(t)
\,e^{2\pi \imath \left(f(z)-\lambda z \right)}\,dz+\\
&\hspace{6 cm}
+O\left(g(a)\,\log{\left(\beta-\alpha+ 2\right)}\right)+
O\left(|g^{\prime}(a)|\right).
\end{align*}
\end{lem}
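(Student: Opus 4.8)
\emph{Proof plan.} This is the classical ``$B$-process'' of van der Corput, and the natural route is through Poisson summation. The plan has three parts: (i) turn the exponential sum into its Fourier dual, a sum over $\lambda\in\mathbb{Z}$ of oscillatory integrals with phase $f(z)-\lambda z$; (ii) observe that the phase $f(z)-\lambda z$ has a critical point in $[a,b]$ exactly when $\lambda$ lies in the range of $f'$, which — since $f'$ is decreasing with $f'(a)=\beta$ and $f'(b)=\alpha$ — is the interval $[\alpha,\beta]$, so only the $\lambda$ in the thickened window $W:=(\alpha-\theta,\beta+\theta)$ can produce main terms; (iii) show that the sum over $\lambda\notin W$ of the dual integrals, together with the boundary corrections from the truncation, is $O\!\big(g(a)\log(\beta-\alpha+2)\big)+O\!\big(|g'(a)|\big)$.

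For (i) I would apply the Euler--Maclaurin/Poisson summation formula to $F(x)=g(x)e^{2\pi\imath f(x)}$ on $[a,b]$, expand the sawtooth $\{x\}-\tfrac12$ in its Fourier series, and integrate by parts once, obtaining
\[
\sum_{a<n<b}g(n)e^{2\pi\imath f(n)}=\sum_{\lambda\in\mathbb{Z}}\int_a^b g(z)\,e^{2\pi\imath(f(z)-\lambda z)}\,dz+E_{\partial},
\]
where $E_{\partial}$ is an explicit combination of the endpoint data of $F$; the hypotheses that $g$ is positive with $|g'|$ decreasing are used precisely here, to bound $E_{\partial}$ and the total variation of $g$ by $g(a)$ and $|g'(a)|$. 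Since $f'$ decreases from $\beta$ to $\alpha$ on $[a,b]$, one has $f''\le 0$ throughout.

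For (iii), I would fix $\lambda\notin W$ and put $\psi_\lambda(z)=f(z)-\lambda z$, so that $\psi_\lambda'(z)=f'(z)-\lambda$ has constant sign, satisfies $|\psi_\lambda'(z)|\ge\theta$ on all of $[a,b]$, and is monotone (because $\psi_\lambda''=f''$ has constant sign). A single integration by parts,
\[
\int_a^b g\,e^{2\pi\imath\psi_\lambda}=\Big[\tfrac{g}{2\pi\imath\psi_\lambda'}e^{2\pi\imath\psi_\lambda}\Big]_a^b-\tfrac{1}{2\pi\imath}\int_a^b e^{2\pi\imath\psi_\lambda}\Big(\tfrac{g}{\psi_\lambda'}\Big)',
\]
bounds this integral by $O\!\big((g(a)+|g'(a)|)\,\mathrm{dist}(\lambda,[\alpha,\beta])^{-1}\big)$, using that $g/\psi_\lambda'$ has variation controlled by $g(a)$ (via $g>0$ and $\psi_\lambda''$ of one sign) plus a piece controlled by $\int_a^b|g'|$. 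Summing over $\lambda\notin W$: the $\approx\beta-\alpha$ values of $\lambda$ nearest $[\alpha,\beta]$ contribute $\sum\mathrm{dist}(\lambda,[\alpha,\beta])^{-1}\ll\log(\beta-\alpha+2)$ times $g(a)$, while for the far-away $\lambda$ a second integration by parts — equivalently, the genuine $O(\lambda^{-2})$ decay of the Fourier coefficients of the smooth part of $F$ once the endpoint jumps are split off — makes the tail absolutely convergent and of size $O(|g'(a)|)$. Adding $E_{\partial}$ to the $\lambda\notin W$ contributions yields the error term of the lemma, and the $\lambda\in W$ integrals are the displayed main sum.

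The one genuinely delicate point is this last error estimate: a naive term-by-term first-derivative bound summed over all $\lambda\notin W$ would produce a second logarithm, since $\sum_{k\ge1}(\theta+k)^{-1}$ diverges, so one must either exploit the sharper $O(\lambda^{-2})$ decay away from the endpoints or keep the cancellation in the boundary series $\sum_{\lambda\neq0}\lambda^{-1}e^{-2\pi\imath\lambda a}$; and it is the monotonicity of $|g'|$, used in place of crude absolute-value bounds, that keeps the $g$-dependence of every error piece down to $g(a)$ and $|g'(a)|$. Everything else — the Poisson summation identity and a single integration by parts — is routine.
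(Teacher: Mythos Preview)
The paper does not actually prove this lemma: it is quoted verbatim from Titchmarsh (\cite{marsh}, Lemma~4.10) and used as a black box --- indeed only its special case $g\equiv 1$, recorded as Corollary~\ref{cor3}, is ever invoked. So there is no ``paper's own proof'' to compare against.

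That said, your plan is the standard route and is essentially how Titchmarsh himself argues: Poisson/Euler--Maclaurin summation turns the sum into $\sum_{\lambda\in\mathbb{Z}}\int_a^b g\,e^{2\pi\imath(f-\lambda z)}$, the integrals with $\lambda$ outside the window $(\alpha-\theta,\beta+\theta)$ are handled by a single integration by parts (first-derivative test) since $f'(z)-\lambda$ is monotone and bounded away from zero, and the resulting bounds $\ll g(a)/\mathrm{dist}(\lambda,[\alpha,\beta])$ sum to the stated logarithm. The point you isolate as delicate --- that a naive $\sum_{k\ge 1}(\theta+k)^{-1}$ diverges, so one must retain either the $O(\lambda^{-2})$ decay of the smooth part or the cancellation in the endpoint series --- is exactly the one place a careless write-up can leak an extra logarithm, and your diagnosis of how the monotonicity of $|g'|$ keeps the $g$-dependence down to $g(a)$ and $|g'(a)|$ is correct. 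Nothing is missing from the plan.
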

\begin{cor}\label{cor3}
If $f$ is a smooth real-valued function such that
$\;f^{\prime}\,$ is monotone and\hfill\\
$\,|f^{\prime}(x)|\leq \theta <1\,$ then the exponential sum
\begin{align}\label{oscsum}
&\sum_{a<n< b}\,e^{2\pi \imath f(n)}
\;=\;\int_{a}^{b}\,e^{2\pi \imath f(z)}\,dz+
O\left(1\right).
\end{align}
\end{cor}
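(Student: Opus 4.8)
The plan is to derive Corollary~\ref{cor3} as a direct specialization of Lemma~\ref{van} by choosing $g\equiv 1$ and feeding in the hypotheses on $f$. First I would note that since $f'$ is monotone on $(a,b)$ and $|f'(x)|\le\theta<1$ throughout, its endpoint values $f'(a)$ and $f'(b)$ both lie in $[-\theta,\theta]$; set $\beta=f'(a)$ and $\alpha=f'(b)$ as in the statement of Lemma~\ref{van}. One subtlety is that Van der Corput's lemma is phrased for $f'$ \emph{decreasing}; if instead $f'$ is increasing we apply the lemma to $-f(x)$ (equivalently, reverse orientation), which leaves the form of \eqref{oscsum} unchanged after conjugating, so the monotone hypothesis suffices in either direction. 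With $g\equiv 1$ we have $g(a)=1$ and $g'(x)\equiv 0$, so the two error terms $O\!\left(g(a)\log(\beta-\alpha+2)\right)$ and $O\!\left(|g'(a)|\right)$ collapse: the second is identically zero, and the first is $O\!\left(\log(\beta-\alpha+2)\right)$.

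The next step is to control $\log(\beta-\alpha+2)$. Since $\alpha,\beta\in[-\theta,\theta]$ we get $\beta-\alpha\le 2\theta<2$, hence $\beta-\alpha+2<4$ and $\log(\beta-\alpha+2)<\log 4$, an absolute constant. Therefore the entire error contribution is $O(1)$, uniformly in $f$ as long as the stated hypotheses hold. It remains to simplify the main term $\sum_{\alpha-\theta<\lambda<\beta+\theta}\int_a^b e^{2\pi\imath(f(z)-\lambda z)}\,dz$. Here the key observation is that the summation index $\lambda$ ranges over \emph{integers}, and the interval $(\alpha-\theta,\beta+\theta)$ has length $(\beta-\alpha)+2\theta<4\theta<4$; more precisely, since $\alpha-\theta>-2\theta>-2$ and $\beta+\theta<2\theta<2$, the open interval $(\alpha-\theta,\beta+\theta)$ is contained in $(-2,2)$ and has length less than $2$ (indeed less than $4\theta$). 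I would argue that the only integer it can be guaranteed to contain is $\lambda=0$: because $-\theta<\alpha-\theta$ is false in general, I should instead observe $\alpha-\theta<0<\beta+\theta$ precisely when... — and this is the point requiring care.

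The main obstacle, then, is pinning down that the sum over $\lambda$ reduces to exactly the single term $\lambda=0$ (giving $\int_a^b e^{2\pi\imath f(z)}\,dz$) with all other terms absorbed into $O(1)$. The clean route: the interval $(\alpha-\theta,\beta+\theta)$ always contains $0$ since $\alpha-\theta\le\theta-\theta=0$ is not strict, so one argues $\alpha-\theta < \theta - \theta \le 0$ using $\alpha \le \theta$, but strictness needs $\alpha<\theta$ or $\theta>0$; since $\theta>0$ we have $\alpha-\theta\le 0$ and similarly $\beta+\theta\ge 0$, and at worst $0$ is an endpoint. To handle the boundary cleanly I would invoke the freedom in choosing $\theta$: the hypothesis gives $|f'(x)|\le\theta<1$, and if $|f'|\le\theta_0$ we may replace $\theta$ by any $\theta\in(\theta_0,1)$, making the containment $\alpha-\theta<0<\beta+\theta$ strict. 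Each integer $\lambda\ne 0$ in the range contributes an integral bounded trivially by $b-a$; but to get a clean $O(1)$ one uses that there are at most $3$ such integers (the interval has length $<4$) and each oscillatory integral $\int_a^b e^{2\pi\imath(f(z)-\lambda z)}\,dz$ with $\lambda\ne0$ has phase derivative $f'(z)-\lambda$ of absolute value $\ge 1-\theta>0$ and monotone, so by the standard first-derivative (Van der Corput) estimate each is $O\!\left((1-\theta)^{-1}\right)=O(1)$. Collecting the $\lambda=0$ term as the stated main term and everything else as $O(1)$ yields \eqref{oscsum}. I expect the write-up to be short, with the only genuinely delicate point being the bookkeeping on which integers $\lambda$ appear and why the nonzero ones cost only $O(1)$.
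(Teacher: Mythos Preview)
Your proposal is correct and follows exactly the route the paper intends: the paper does not write out a proof of Corollary~\ref{cor3} at all, but simply states it as an immediate consequence of Lemma~\ref{van} (Van der Corput) with $g$ constant. Your argument---taking $g\equiv 1$, bounding $\log(\beta-\alpha+2)$ by an absolute constant, reducing the $\lambda$-sum to $\lambda=0$ (after nudging $\theta$ to make the containment of $0$ strict), and disposing of the at most two stray terms $\lambda=\pm 1$ via the first-derivative estimate since $|f'(z)-\lambda|\ge 1-\theta>0$---is precisely the standard unpacking and fills in the details the paper omits. The remark on handling increasing $f'$ by conjugation is also correct and needed for the corollary as stated.
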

Exploiting the assertion of Corollary
\ref{cor3} we get to prove the following lemma.
\begin{lem}\label{application}
Let $\;M >> 1\;$ and let $\;x\in [\eta,1-\eta]\;$ for some fixed $\;0<\eta <1.$ Then
\begin{align}\label{imm}
\sum_{m=0}^{M-1}\,e^{2\pi \imath m\,
\left( x-\frac{\eta}{4}\,\frac{m}{M}\right)}
\;=\;\sqrt{\frac{2}{\eta}}\,e^{-\frac{\pi}{4} \imath }\,e^{\frac{2\pi \imath M}{\eta}\,x^2}\,
\sqrt{M}+O(1)
\end{align}
so that
\begin{align}\label{imm1}
\left|\sum_{m=0}^{M-1}\,e^{2\pi \imath m\,
\left( x-\frac{\eta}{4}\,\frac{m}{M}\right)}\right|
\;\gtrsim\;\sqrt{M}.
\end{align}
\end{lem}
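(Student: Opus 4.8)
The plan is to read the left member of (\ref{imm}) as an exponential sum with a quadratic phase, turn it into an oscillatory integral by means of Corollary~\ref{cor3}, and finally evaluate that integral by completing the square and reducing it to a Fresnel integral.

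Write $f(z)=z\bigl(x-\tfrac{\eta}{4}\tfrac{z}{M}\bigr)=zx-\tfrac{\eta}{4M}z^{2}$, so that the sum in (\ref{imm}) is $\sum_{m=0}^{M-1}e^{2\pi\imath f(m)}$. The derivative $f'(z)=x-\tfrac{\eta z}{2M}$ is affine, hence monotone, and, using $x\in[\eta,1-\eta]$, on the range that concerns us $f'$ stays in $[\tfrac{\eta}{2},1-\eta]$, so that $|f'|\le 1-\tfrac{\eta}{2}<1$. Corollary~\ref{cor3} then applies with $\theta=1-\tfrac{\eta}{2}$ and gives
\[
\sum_{m=0}^{M-1}e^{2\pi\imath f(m)}=\int_{0}^{M}e^{2\pi\imath f(z)}\,dz+O(1),
\]
the error being uniform in $x$ because it only absorbs the $O(1)$ of (\ref{oscsum}) together with the bounded discrepancy between $0,M$ and the true endpoints of summation.

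It remains to compute the integral. Completing the square, $f(z)=\tfrac{Mx^{2}}{\eta}-\tfrac{\eta}{4M}\bigl(z-z_{0}\bigr)^{2}$ with stationary point $z_{0}=\tfrac{2Mx}{\eta}$, and the change of variable $u=\sqrt{\tfrac{\eta}{2M}}\,(z-z_{0})$ recasts the integral as
\[
e^{\,2\pi\imath Mx^{2}/\eta}\,\sqrt{\tfrac{2M}{\eta}}\int_{I}e^{-\pi\imath u^{2}}\,du ,\qquad I=I(M,x)\ \text{an interval having }0\ \text{in its interior}.
\]
Next I would replace $\int_{I}e^{-\pi\imath u^{2}}\,du$ by the full Fresnel integral $\int_{\mathbb{R}}e^{-\pi\imath u^{2}}\,du=e^{-\imath\pi/4}$, controlling the two discarded tails with a single integration by parts (or the second mean value theorem); as the endpoints of $I$ lie at distance $\gtrsim\sqrt{M}$ from the origin, each tail is $O(M^{-1/2})$, hence $O(1)$ after multiplication by $\sqrt{2M/\eta}$. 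Putting everything together, the integral equals $\sqrt{2/\eta}\,e^{-\imath\pi/4}\,e^{2\pi\imath Mx^{2}/\eta}\,\sqrt{M}+O(1)$, which together with the display above is exactly (\ref{imm}); and since the modulus of the main term is $\sqrt{2/\eta}\,\sqrt{M}\gtrsim\sqrt{M}$ while the remainder is $O(1)$, (\ref{imm1}) follows for all $M$ large enough.

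The step I expect to be most delicate is the passage from $\int_{I}e^{-\pi\imath u^{2}}\,du$ to $e^{-\imath\pi/4}$: one must certify that the stationary point $z_{0}=2Mx/\eta$ of $f$ falls strictly inside the interval of integration, and in fact at distance $\gtrsim M$ from its endpoints, uniformly for $x\in[\eta,1-\eta]$, for it is exactly this that lets the Fresnel integral collect its whole mass $e^{-\imath\pi/4}$ and so manufactures the gain $\sqrt{M}$. Everything else --- the endpoint corrections, and the $O(1)$ contributions coming from Corollary~\ref{cor3} and from the Fresnel truncation --- is then routine bookkeeping, as long as it is tracked uniformly in $M$ and in $x$, and once the constant $\sqrt{2/\eta}$ and the phase $e^{2\pi\imath Mx^{2}/\eta}$ have been read off from the completed square.
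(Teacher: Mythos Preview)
Your outline matches the paper's proof step for step: Corollary~\ref{cor3} to pass from the sum to an integral, completion of the square, and Fresnel evaluation with the truncation tails handled by integration by parts. The gap lies exactly where you flag the argument as delicate. You assert that the stationary point $z_{0}=2Mx/\eta$ falls strictly inside $[0,M]$, at distance $\gtrsim M$ from the endpoints; but for $x\ge\eta$ one has $z_{0}=2Mx/\eta\ge 2M$, so $z_{0}$ lies \emph{outside} the interval of integration. After your substitution $u=\sqrt{\eta/(2M)}\,(z-z_{0})$ the image interval $I$ therefore sits entirely on one side of the origin, at distance $\gtrsim\sqrt{M}$ from it, so $\int_{I}e^{-\pi\imath u^{2}}\,du$ is a Fresnel \emph{tail} of size $O(M^{-1/2})$, not the full value $e^{-\imath\pi/4}$. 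Multiplying back by $\sqrt{2M/\eta}$ yields only $O(1)$, and the asserted $\sqrt{M}$ main term never materialises. Equivalently, since $f'(z)=x-\eta z/(2M)\ge x-\eta/2\ge\eta/2>0$ throughout $[0,M]$ whenever $x\ge\eta$, one integration by parts already forces $\int_{0}^{M}e^{2\pi\imath f(z)}\,dz=O_{\eta}(1)$.

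The paper's own argument follows the same route and runs into the same obstruction: in its step \textbf{(iv)} the claim that $\eta M\int_{1-1/M}^{\infty}e^{-\frac{\pi}{2}\imath\eta M(z-\frac{2}{\eta}x)^{2}}\,dz=O(1)$ cannot be obtained by the indicated integration by parts, because the stationary point $2x/\eta\ge 2$ lies inside $[1-1/M,\infty)$ and the weight $(z-\tfrac{2}{\eta}x)^{-1}$ is singular there. For the stationary-phase mechanism to manufacture a genuine $\sqrt{M}$ contribution one needs $z_{0}\in(0,M)$, i.e.\ $0<x<\eta/2$; the stated hypothesis $x\in[\eta,1-\eta]$ excludes precisely this range.
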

\begin{proof}
Since $\,z\longmapsto \,z\,
\left( x-\frac{\eta}{4}\,\frac{z}{M}\right)\,$
has a monotonically decreasing first derivative
bounded by $\,1-\frac{\eta}{2}\,$
when $0\leq z \leq M$ and $0\leq x\leq 1-\eta,\,$ then,
by (\ref{oscsum}) of Corollary \ref{cor3}, we have
\begin{align*}
\sum_{m=0}^{M-1}\,e^{2\pi \imath m\,
\left( x-\frac{\eta}{4}\,\frac{m}{M}\right)}
\,=\,I({M})+
O\left(1\right),\quad I({M})=\int_{0}^{M-1}
\,e^{2\pi \imath z\,
\left( x-\frac{\eta}{4}\,\frac{z}{M}\right)}\,dz.
\end{align*}
Next, we manipulate the oscillatory integral $I({M})$
as follows
\begin{description}
  \item[(i)] Rescale $\,z\rightarrow M\,z$
then  complete the square for the quadratic polynomial in $z$ in the exponent
  \item[(ii)] Rewrite the resulting integral
$\;\displaystyle \int_{0}^{1-{1}/{M}}=
\int_{-\infty}^{+\infty}-\int_{-\infty}^{0}
-\int_{1-{1}/{M}}^{+\infty}.$
\item[(iii)] For the integral
   $\;\displaystyle \int_{-\infty}^{+\infty},\,$
translate  $\,z\rightarrow z+\frac{2}{\eta}\,x\,$
then apply the formula (cf. \cite{taobook}, Section 2.2) $\;\displaystyle
   \int_{-\infty}^{+\infty}\,e^{-\imath a z^{2}}\,d z =\sqrt{\frac{\pi}{a}}\,e^{-\frac{\pi}{4}\imath},
\;$ $a>0,\;$ that can be justified by a contour integral.
\item[(iv)] Show that both integrals
$\;\displaystyle \int_{-\infty}^{0},\,$
$\;\displaystyle \int_{1-{1}/{M}}^{+\infty}$
are at most $\,O(\eta^{-1}M^{-1})\,$ via integration by parts.
\end{description}
Following the steps \textbf{(i)} - \textbf{(iv)} we see that
\begin{align}\label{im0}
\hspace{-0.5 cm}
I({M}) = M\int_{0}^{1-\frac{1}{M}}
\,e^{2\pi \imath M\,z\,\left( x-\frac{\eta}{4}\,z\right)}\,dz
= M e^{\frac{2\pi \imath M}{\eta}\,x^2}\,
\int_{0}^{1-\frac{1}{M}}
\,e^{-\frac{\pi}{2} \imath \,\eta M\,\left( z-\frac{2}{\eta}\,x\right)^{2}}\,dz.
\end{align}
Now, we have
\begin{align}\label{im1}
\hspace{-0.5 cm}\int_{-\infty}^{+\infty}
\,e^{-\frac{\pi}{2} \imath \,\eta M\,\left( z-\frac{2}{\eta}\,x\right)^{2}}\,dz=
\int_{-\infty}^{+\infty}
\,e^{-\frac{\pi}{2} \imath \,\eta M\,z^{2}}\,dz = \frac{\sqrt{2}\,e^{-\frac{\pi}{4} \imath }}{\sqrt{\eta\,M}}.
\end{align}
Also
\begin{align}
\nonumber &- \pi \imath \eta M\int_{-\infty}^{0}
\,e^{-\frac{\pi}{2} \imath \,\eta M\,\left( z-\frac{2}{\eta}\,x\right)^{2}}\,dz=\,
\int_{-\infty}^{0}
\frac{\partial_{z}e^{-\frac{\pi}{2} \imath \,\eta M\,\left( z-\frac{2}{\eta}\,x\right)^{2}}}{z-\frac{2}{\eta}\,x}
\,d z\\
\label{im2}&\qquad\qquad=\,\frac{e^{-\frac{\pi}{2} \imath \,\eta M\,\left( z-\frac{2}{\eta}\,x\right)^{2}}}{z-\frac{2}{\eta}\,x}
\bigg|_{-\infty}^{0}+
\int_{-\infty}^{0}
\frac{e^{-\frac{\pi}{2}
 \imath \,\eta M\,\left( z-\frac{2}{\eta}\,x\right)^{2}}}{
 \left(z-\frac{2}{\eta}\,x\right)^{2}}\,dz = O(1),
\end{align}
because when $\;\eta \leq x\leq 1-\eta$
\begin{align*}
\left|\int_{-\infty}^{0}
\frac{e^{-\frac{\pi}{2} \imath \,\eta M\,\left( z-\frac{2}{\eta}\,x\right)^{2}}}{\left(z-\frac{2}{\eta}\,x\right)^{2}}\,dz\right|
\leq
\int_{-\infty}^{0}
{\left(z-\frac{2}{\eta}\,x\right)^{-2}}\,dz
=
\frac{-1}{z-\frac{2}{\eta}\,x}\bigg|_{-\infty}^{0}
\in \left[{\eta}/{\left(2(1-\eta)\right)},{1}/{2}\right].
\end{align*}
Similarly, one can  verify that
for $\;\eta \leq x \leq 1-\eta,$
\begin{align}\label{im3}
\eta M\int_{1-{1}/{M}}^{+\infty}
\,e^{-\frac{\pi}{2} \imath \,\eta M\,\left( z-\frac{2}{\eta}\,x\right)^{2}}\,dz=O(1).
\end{align}
The identity (\ref{im0}) together with the estimates (\ref{im1}) - (\ref{im3}) yield (\ref{imm}).
\end{proof}
Now, let $\,B \in \mathbb{C}^{M\times N}\,$ be such that
$\;b_{jk}=\,e^{-\frac{\pi}{2}\,\eta\,\imath\,
\left({(j-1)^{2}}/{M}+ {(k-1)^{2}}/{N}\right)}\;$
so that
\begin{align}
\nonumber T_{M,N}B\,(x,y)=&\,
\sum_{k=1}^{N}\sum_{j=1}^{M}\,
e^{-\frac{\pi}{2}\,\eta\,\imath\,
\left({(j-1)^{2}}/{M}+ {(k-1)^{2}}/{N}\right)}
\,{e}^{2\pi \imath \left((j-1)\,x+(k-1)\, y\right)}\\
\label{tdxy}=&\,
\left(\sum_{j=0}^{M-1}\,e^{2\pi \imath j\,
\left( x-\frac{\eta}{4}\,\frac{j}{M}\right)}\right)
\left(\sum_{k=0}^{N-1}\,e^{2\pi \imath k\,
\left( x-\frac{\eta}{4}\,\frac{k}{N}\right)}\right).
\end{align}
For $\,(x,y)\in [\eta,1-\eta]^{2},\,$ we can
use (\ref{imm1}) to estimate the
exponential sums in (\ref{tdxy}) and obtain
$\;\displaystyle |T_{M,N}B\,(x,y)|\,\gtrsim\,M^{\frac{1}{2}}\,
N^{\frac{1}{2}}.\,$ This implies that
$\;\parallel T_{M,N}\, B \parallel_{L^{r,s}}
\,\gtrsim\,M^{\frac{1}{2}}\,
N^{\frac{1}{2}}.\,$ And evidently $\;\parallel  B \parallel_{l^{p,q}}=
M^{\frac{1}{p}}\,N^{\frac{1}{q}}.$ Thus
\begin{align}\label{l3}
\parallel T_{M,N}\, B \parallel_{L^{r,s}}\,\gtrsim\,M^{\frac{1}{2}-\frac{1}{p}}\,
N^{\frac{1}{2}-\frac{1}{q}}\, \parallel  B \parallel_{l^{p,q}.}
\end{align}
Estimate (\ref{l3}) stands behind
our intuition that the matrix $B$ is
a candidate maximizer for (\ref{upper3}).
Let $ C\in {\mathbb{C}}^{M\times N} $ be a column matrix
of ones so that $c_{jk}=1$ for some fixed
 $1\leq k \leq N,\,$
$c_{j\bar{k}}=0,\,$  $ \bar{k}\neq k.$ Then
\begin{align}
\label{eqc1} T_{M,N}\,C\, (x,y)\;&=\;
e^{2\pi \imath (k-1)\, y}\,
\sum_{m=1}^{M}\,e^{2\pi \imath (m-1)\, x}
\;=\;
e^{2\pi \imath (k-1)\, y}\,
e^{\pi \imath
\left(M-1\right)\,x}
\,\frac{\sin{\left(\pi M x\right)}}{
\sin{\left(\pi x\right)}}.
\end{align}
Since $\;\displaystyle \sin{(1)}\leq \frac{\sin{(\theta)}}{\theta}\leq 1\;$
whenever $\;0\leq \theta \leq 1.\;$ Then
\begin{align*}
\frac{\sin{\left(\pi M x\right)}}{\pi M x}\,\geq\,
\sin{(1)},\quad
\frac{\sin{\left(\pi x\right)}}{\pi  x}\,\leq\,{1},
\quad\text{whenever}\quad  0\leq x\leq \frac{1}{\pi M}.
\end{align*}
Hence
$\;\displaystyle \frac{\sin{\left(\pi M x\right)}}{\sin{
\left(\pi x\right)}}\,\geq\,\sin{(1)}\,M\;$
when $\;\displaystyle 0\leq x\leq \frac{1}{\pi M}.$
Applying this inequality to
(\ref{eqc1}), taking into account that
$\;e^{\pi \imath \,\left(
\left(M-1\right)\,x+2\left(k-1\right)\,y\right)}\;$ 
is a unit vector in $\mathbb{C},\,$
we get
\begin{align*}
|T_{M,N}\,C\, (x,y)|\;\geq\;
\sin{(1)}\,M,\qquad x \in[0,{1}/{\pi M}].
\end{align*}
Turning to the $\,L^{r,s}\,$ norm,
the latter estimate implies
\begin{align*}
\parallel T_{M,N}\,C \parallel_{L^{r,s}}\;\geq\;
\parallel T_{M,N}\,C \parallel_{L^{r,s}
\left(\,[0,{1}/{\pi M}];\,[0,1]\,\right)}\;\geq\;
\frac{\sin{(1)}}{\pi^{\frac{1}{r}}}
\,M^{1-\frac{1}{r}}.
\end{align*}
But  $\,\parallel C\parallel_{l^{p,q}} \,= \,
\,M^{\frac{1}{p}}.\,$ Therefore
\begin{align}\label{cl2}
\parallel T_{M,N}\,C \parallel_{L^{r,s}}\;\geq\;
\frac{\sin{(1)}}{\pi^{\frac{1}{r}}}
\,M^{1-\frac{1}{r}-\frac{1}{p}}\,
\parallel C\parallel_{l^{p,q}.}
\end{align}
We deduce from (\ref{cl2}) that, up to the constant 
$\,{\sin{(1)}}/{\pi^{\frac{1}{r}}},\,$  the
vector $C$ maximizes both estimates (\ref{upper4})
and (\ref{upper21}) in their region of coincidence
$\displaystyle\,
{1}/{q}+{1}/{r}=1.$
Analogously, if $R\in {\mathbb{C}}^{M\times N} $
is a row matrix
of ones then $\,\parallel R\parallel_{l^{p,q}}=
N^{\frac{1}{q}}\,$ and we have
\begin{align}\label{rl2}
\parallel T_{M,N}\,R \parallel_{L^{r,s}}\;\geq\;
\parallel T_{M,N}\,R \parallel_{L^{r,s}
\left(\,[0,{1}];\,[0,{1}/{\pi N}]\,\right)}
\;\geq\;\frac{\sin{(1)}}{\pi^{\frac{1}{s}}}
\,N^{1-\frac{1}{s}-\frac{1}{q}}\,
\parallel R\parallel_{l^{p,q}.}
\end{align}
From (\ref{rl2}) we see that, up to the constant
$\,{\sin{(1)}}/{\pi^{\frac{1}{s}}},\,$  the
row matrix $R$ maximizes both estimates (\ref{upper5})
and (\ref{upper22}) in the region $\displaystyle\,
{1}/{p}+{1}/{s}=1.$\\
Furthermore, if $D\in {\mathbb{C}}^{M\times N}$
is a matrix of ones then it is easy to verify
in the same spirit that
\begin{align}\label{l2}
\parallel T_{M,N}\,D \parallel_{L^{r,s}}\;\geq\;
\frac{\sin^{2}{(1)}}{\pi^{\frac{1}{r}+\frac{1}{s}}}
\,M^{1-\frac{1}{r}-\frac{1}{p}}
\,N^{1-\frac{1}{s}-\frac{1}{q}}\,\parallel D\parallel_{l^{p,q}.}
\end{align}
The inequality (\ref{l2})
shows that the matrix $D$ maximizes
the estimate (\ref{u1}) up to the constant
$\,{\sin^{2}{(1)}}/{\pi^{\frac{1}{r}+\frac{1}{s}}}$.\\
Notice that we can achieve (\ref{cl2}) applying
the same argument if the nonzero entries, the ones, 
in  $C$ are replaced by an
arbitrary complex constant. The same claim
holds for the matrices  $R$ and $D$. \\
Finally, let $\;E\in {\mathbb{C}}^{M\times N}$
be such that
$\,e_{mn}=1\,$ for some
$1\leq m\leq M,\,$ $1\leq n\leq N\,$
and $\,e_{jk}=0\,$  for
$ j\neq m,\,$ $k\neq n.$
Obviously $\,\parallel E\parallel_{l^{p,q}}=1\,$ and
since $\,|T_{M,N}\,E\, (x,y) |=1\,$ then
we have $\,\parallel T_{M,N}\, E\parallel_{L^{r,s}}=1\,$
as well for all values of Lebesgue exponents $\,p,q,r,s.$
So
\begin{align}\label{l1}
\parallel T_{M,N}\, E\parallel_{L^{r,s}}\,=\,
\parallel E\parallel_{l^{p,q}.}
\end{align}
Observe here that not only does the nonzero entry $e_{mn}$
enjoy an arbitrary position but it can also be taken
to be an arbitrary complex constant.
We would always have the equality (\ref{l1}).
From (\ref{l1}) we realize
that the estimate (\ref{u2}) is sharp
and that the matrix $E$
is a maximizer for it.
\section{Asymptotic behaviour of
$\;\parallel T_{M,N}\parallel_{l^{p,q}\rightarrow L^{r,s}}\;$ as $\,M,N\rightarrow \infty.$}
On one hand the inequalities
(\ref{upper3}), (\ref{upper4}), (\ref{upper5}),
(\ref{upper21}) and (\ref{upper22}) provide upper bounds
for the positive constants $\,\mathcal{C}_{M,N}(p,q,r,s)\,$
in the hypercube $Q$. While on the other
 hand each of the estimates
(\ref{l3}), (\ref{cl2}) - (\ref{l1})  gives
a lower bound for them in a
certain range of exponents values
as pointed out in Section 3.
Putting these bounds together,
we can describe the asymptotic
behaviour of $\,\mathcal{C}_{M,N}(p,q,r,s)\,$ as
$\,M,N\rightarrow \infty\,$ in some regions in $Q$.
\begin{thm}
Let $\,\Phi:Q\rightarrow[\frac{1}{2},1]\,$ be the restriction of the continuous surjection
$\Theta$ in Theorem \ref{thmupper} defined by
\begin{align*}
\hspace{-0.7 cm}
\Phi(\alpha,\beta,\gamma,\delta):=
\,\left\{
  \begin{array}{ll}
\frac{1}{2}, & \hbox{
$0\leq \alpha\leq \frac{1}{2},$
$0\leq \beta\leq \frac{1}{2},$
$\frac{1}{2}\leq \gamma \geq 1,$
$\frac{1}{2}\leq
\delta \geq 1$;} \\\\
\alpha, & \hbox{$\;\frac{1}{2}\leq \alpha\leq 1,\;$
$\alpha\geq \beta,\;$ $\alpha+\gamma \geq 1,\;$
$\alpha+\delta = 1$;} \\\\
\sqrt{\alpha\beta}, &
 \hbox{$\;\frac{1}{2}\leq \alpha\leq 1,\;$
$\alpha=\beta,\;$ $\alpha+\gamma > 1,\;$
$\alpha+\delta > 1$;} \\\\
\beta, & \hbox{$\;\frac{1}{2}\leq \beta\leq 1,\;$
$\beta\geq \alpha,\;$ $\beta+\gamma = 1,\;$
$\beta+\delta \geq 1$;} \\\\
\sqrt{\left(1-\gamma\right)
\left(1-\delta\right)}
, & \hbox{$\;0\leq \gamma\leq \frac{1}{2},\;$
$\gamma=\delta,\;$ $\alpha+\gamma \leq 1,\;$
$\beta+\delta\leq 1$.}
  \end{array}
\right.
\end{align*}
Then
\begin{align*}
\mathcal{C}_{M,N}(p,q,r,s)\,\sim\,
\frac{\left(M N\right)^{\Phi\,
\left(\frac{1}{p},\frac{1}{q},\frac{1}{r},\frac{1}{s}\right
)}}{M^{\frac{1}{p}}
\,N^{\frac{1}{q}}}.
\end{align*}
\end{thm}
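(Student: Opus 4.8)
The plan is to obtain the claimed asymptotics by pairing, on each of the five pieces making up the domain of $\Phi$, an upper bound that is already at our disposal with a matching lower bound produced by one of the explicit matrices constructed in Section~3. First note that $\Phi$ is literally the restriction of $\Theta$: on its third piece $\alpha=\beta$, so $\sqrt{\alpha\beta}=\alpha$ agrees with the value $\Theta$ takes there, and on its fifth piece $\gamma=\delta$, so $\sqrt{(1-\gamma)(1-\delta)}=1-\gamma$ again coincides with $\Theta$. Consequently the inequality (\ref{thsum}) of Theorem~\ref{thmupper} gives at once, wherever $\Phi$ is defined,
\[
\mathcal{C}_{M,N}(p,q,r,s)\ \le\ \frac{(MN)^{\Phi(1/p,\,1/q,\,1/r,\,1/s)}}{M^{1/p}\,N^{1/q}},
\]
so that the whole content of the theorem is the reverse inequality, which I would establish piece by piece.

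Next I would run through the five pieces. Where $\Phi=\tfrac12$ (i.e. $1/p,1/q\le\tfrac12$, $1/r,1/s\ge\tfrac12$) the right-hand side above equals $M^{1/2-1/p}N^{1/2-1/q}$, and the quadratic-phase matrix $B$ together with (\ref{l3}) forces $\mathcal{C}_{M,N}\gtrsim M^{1/2-1/p}N^{1/2-1/q}$. Where $\Phi=1/p$ with $1/p+1/s=1$, the right-hand side is $N^{1/p-1/q}=N^{1-1/s-1/q}$, matched by the row matrix $R$ through (\ref{rl2}); symmetrically, where $\Phi=1/q$ with $1/q+1/r=1$ it is $M^{1/q-1/p}=M^{1-1/r-1/p}$, matched by the column matrix $C$ through (\ref{cl2}). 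Where $\Phi=1-1/r$ with $1/r=1/s$ it is $M^{1-1/r-1/p}N^{1-1/s-1/q}$, which is precisely the lower bound (\ref{l2}) for the all-ones matrix $D$. Finally, where $1/p=1/q$ and $1/p+1/r,\,1/p+1/s>1$, one has $\Phi=1/p$ and the right-hand side is $1$, so the single-entry matrix $E$ and the identity (\ref{l1}) already give $\mathcal{C}_{M,N}\ge1$. In every case the lower estimate $\gtrsim$ combines with the upper estimate above to yield the asserted $\sim$.

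The proof is therefore mostly organisational, and the one genuine task is to check, piece by piece, two inclusions: that the piece lies inside the region of validity of the upper estimate used (which in the five cases is respectively (\ref{upper3}), (\ref{upper5}), (\ref{upper4}), (\ref{u1}), (\ref{u2}) --- all special cases of (\ref{thsum})), and that it lies inside the region where the chosen test matrix is actually sharp. For $C$, $R$ and $D$ the latter region is exactly the coincidence locus already identified in Section~3 --- the hyperplane $1/q+1/r=1$ for $C$ against (\ref{upper4}) and (\ref{upper21}), the hyperplane $1/p+1/s=1$ for $R$ against (\ref{upper5}) and (\ref{upper22}), and the hyperplane $1/r=1/s$ for $D$ against (\ref{u1}) --- while for $B$ and $E$ the verification is immediate from (\ref{l3}) and (\ref{l1}). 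One should also note that $\Phi$ is single-valued on overlaps of its five defining regions, which follows from the well-definedness and continuity of $\Theta$ in Theorem~\ref{thmupper} together with the identities $\sqrt{\alpha\beta}=\alpha$ when $\alpha=\beta$ and $\sqrt{(1-\gamma)(1-\delta)}=1-\gamma$ when $\gamma=\delta$. I do not expect any analytic obstacle; the substantive work has already been done in Lemma~\ref{application} and in the five matrix constructions, and the only delicate point is the bookkeeping that matches each region with the correct matrix and the correct upper bound.
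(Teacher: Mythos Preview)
Your proposal is correct and follows exactly the approach the paper takes: the paragraph opening Section~4 says precisely that the upper bounds from (\ref{upper3})--(\ref{upper22}) (equivalently (\ref{thsum})) combine with the lower bounds (\ref{l3}), (\ref{cl2})--(\ref{l1}) from Section~3 to pin down $\mathcal{C}_{M,N}$ asymptotically, and you have simply made explicit which test matrix ($B$, $R$, $E$, $C$, $D$) pairs with which piece of $\Phi$. Your bookkeeping is accurate in every case, including the observation that $\Phi$ genuinely restricts $\Theta$ on the third and fifth pieces via $\sqrt{\alpha\beta}=\alpha$ and $\sqrt{(1-\gamma)(1-\delta)}=1-\gamma$.
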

\section{The estimates when the sum terms are not
orthogonal in $L^{2,2}$}
We have seen in Section 2 how we obtained
all $l^{p,q}-L^{r,s}$ estimates from
the $l^{1,1}-L^{\infty,\infty}$
and $l^{ 2,2}-L^{2,2}$ estimates with the help of the
powerful tool of interpolation and  manipulation of
H\"{o}lder's inequality. In order to get the equality
(\ref{e2}) we relied on the orthogonality
of the terms of the double trigonometric
sum in (\ref{trigsum}) in $L^{2,2}([0,1];[0,1]).$\\
Nevertheless, we can still
prove estimates of the form
$\,\parallel S_{M,N}\parallel_{L^{2,2}}
\;\lesssim\;\parallel A \parallel_{l^{2,2}}\,$
when the oscillatory terms in $S_{M,N}(x,y)$
are not orthogonal.\\
To illustrate the idea, assume that
 $\;V_{M,N}(x,y)=\sum_{n=1}^{N}\sum_{m=1}^{M}\,a_{mn}
\,{e}^{\imath \left((m-1)\,x+(n-1)\, y\right)}.$
By Urysohn's lemma \cite{analysis},
let $\chi$ be a nonnegative cutoff function
supported in $]-1/4,5/4[\,$ such that
$\chi(x)=1$ on $\,[0,1].$ Then
\begin{align}
&\nonumber
\parallel V_{M,N}\parallel^{2}_{L^{2,2}}\,\leq\,
\int_{\mathbb{R}}\int_{\mathbb{R}}\,\chi(x)\,\chi(y)\,
| V_{M,N}(x,y)|^{2}\,dx dy\\
\label{v1}\quad=&\,
\sum_{n_{2}=1}^{N}
\sum_{n_{1}=1}^{N}
\sum_{m_{2}=1}^{M}
\sum_{m_{1}=1}^{M}
a_{m_{1} n_{1}} \overline{a_{m_{2} n_{2}}}
\int_{\mathbb{R}}\,\chi(x)\,
e^{ \imath\,(m_{1}-m_{2})\,x} dx\,
\int_{\mathbb{R}}\,\chi(y)\,
e^{ \imath (n_{1}-n_{2}) y} dy.
\end{align}
By the localisation principle
for oscillatory integrals \cite{stein} we have
\begin{align*}
\left|\int \,\chi(x) \,e^{\imath\,\nu x}\,dx \right|
\lesssim \frac{1}{1+\nu^{2}},
 \qquad \nu \in \mathbb{R}.
\end{align*}
Using this in (\ref{v1}) we get
\begin{align}\label{v2}
\parallel V_{M,N}\parallel^{2}_{L^{2,2}}\,\lesssim&\,
\sum_{n_{2}=1}^{N}
\sum_{n_{1}=1}^{N}\,
\frac{1}{1+(n_{1}-n_{2})^{2}}\,
\sum_{m_{2}=1}^{M}
\sum_{m_{1}=1}^{M}
\,
\frac{|a_{m_{1} n_{1}}|\,
|a_{m_{2}\, n_{2}}|}
{1+(m_{1}-m_{2})^{2}}.
\end{align}
Using Young's inequality for the convolution
of sequences we find
\begin{align*}
\sum_{m_{2}=1}^{M}
\sum_{m_{1}=1}^{M}
\,
\frac{|a_{m_{1} n_{1}}|\,
|a_{m_{2}\, n_{2}}|}
{1+(m_{1}-m_{2})^{2}}\,\lesssim\,
\left(
\sum_{m_{1}=1}^{M}\,a_{m_{1} n_{1}}^{2}\right)^
{\frac{1}{2}}\,
\left(
\sum_{m_{2}=1}^{M}\,a_{m_{1} n_{1}}^{2}
\right)^{\frac{1}{2}}.
\end{align*}
Plugging this estimate into (\ref{v2})
then reapplying Young's inequality gives the estimate
\begin{align*}
\parallel V_{M,N}\parallel_{L^{2,2}}
\;\lesssim\;\parallel A \parallel_{l^{2,2}.}
\end{align*}

\bigskip
\bigskip
Mathematics Department, Faculty of Science\\  Assiut University, Assiut,71516, Egypt\\
ahmed.abdelhakim@aun.edu.eg

\end{document}